\newtheorem{theorem}{Theorem}[section]
\theoremstyle{definition}
\makeatletter \theoremstyle{remark}
\numberwithin{equation}{section}
\begin{document}
\title{Yamabe flow on modified Riemann extension}
\author[ Harish D.]{Harish D.}
\address[Harish D.]{Department of Mathematics, Tripura University, Agartala-799022, INDIA}
\email{itsme.harishd@gmail.com,harishd@tripurauniv.in}
\subjclass[2010]{53C20, 53C44.}\keywords{Riemann extension, evolution equations}
\maketitle
\begin{abstract}
In this paper the rate relations of Riemann, conformal, conharmonic and Weyl curvature tensors under Yamabe flow are studied. Modified Riemann extensions under Yamabe flow is discussed. The paper ends with remarks on some standard metrics.
\end{abstract}
\section{Introduction}
The Yamabe conjecture states that given a compact Riemannian manifold $(M, g_0)$, 
there exists a metric $g$ pointwise conformal to $g_0$ with constant scalar 
curvature $R$. In 1984, R.  Schoen (see \cite{sc}) obtained a complete solution to the 
Yamabe conjecture. It is still an unsolved problem to find a natural evolution 
equation which deforms any Riemannian metric conformally to a constant scalar 
curvature metric.  
In dimension $n=2$ the Yamabe flow is equivalent to the Ricci flow (defined by
$\frac{\partial g_{jk}}{\partial t}=R_{jk}$
where $R_{jk}$ stands for the Ricci tensor). 
However in dimension $n > 2$
the Yamabe and Ricci flows do not agree, since the first one preserves the conformal
class of the metric but the Ricci flow does not in general. The fixed points of the Yamabe flow are metrics of constant scalar curvature in the given conformal class. 

Yamabe flow was initially introduced by Richard Hamilton(unpublished) and was later
studied by Bennett Chow\cite{l},Simon Brendle, Fernando C. Marques, and others. By using the Ricci flow and Yamabe flow introduced by R. Hamilton as the tools, one can prove some interesting Myers type results, which state that if a complete Riemannian manifold with bounded curvature satisfies a curvature pinching condition, then this manifold must be compact\cite{k}.
 
The Yamabe flow equation is the evolution equation
$\frac{\partial g}{\partial t}=(S-R)g$
where $R$ and $S$ are the scalar curvature and average scalar curvature respectively. As flow progresses the metric changes and hence
the properties related to it.
The Riemann extensions of Riemannian or non-Riemannian spaces
introduced by Patterson and Walker \cite{pat}, can be constructed
with the help of the Christoffel coefficients $\Gamma^i_{jk}$ of
corresponding Riemann space or with connection coefficients
$\Pi^i_{jk}$ in the case of the space of affine connection. The
idea of this theory is application of the methods of Riemann
geometry for studying the properties of non-Riemannian spaces. Though the Riemann extensions is rich in geometry, here in our discussions, the modified Riemann extensions  fit naturally in to the frame work. Modified Riemann extensions were introduced in \cite{C-E-P-A} and their properties we list briefly in the next section. 

Yamabe flows are non-linear parabolic differential equations which describe the evolution of geometric structures. Inspired by Hamilton's Ricci flow, the field of yamabe flows has developed tremendously with applications in wide variety of areas such as geometry and topology. Generally, on applying Yamabe flow on Modified Riemann Extension, the resulting metric evolved need not be a Modified Riemann Extension. Here in this paper we study the necessary and sufficient condition for the Yamabe flow to be on Modified Riemann Extension. 

\section{Preliminaries}
Let $(M,g)$ be a Riemannian metric. Then Yamabe flow is defined as 
\begin{equation}\label{imp}
\frac{\partial g}{\partial t}=(S-R)g
\end{equation}
where $R$ is the scalar curvature and $S$ is the average scalar curvature functional 
given by, 
\begin{equation}
S=\frac{1}{V(g)^{\frac{n-2}{n}}}.
\end{equation}

Let $M$ be an n-dimensional manifold and $\nabla$ be a torsion-free affine connection of $M$.
The modified Riemann extension of $(M, \nabla)$ is the cotangent bundle $T^*M$ equipped with a metric $\bar{g}$ whose local components given by
\begin{align}\label{example}
\bar{g}_{ij}=-2 \omega_l \Gamma^l_{ij}+c_{ij},\,\,& \bar{g}_{ij^*}=\delta_i^j, \bar{g}_{i^*j}=\delta_i^j \,\,\textrm{and}\,\, \bar{g}_{i^{*}j^{*}}=0, 
\end{align}
where $\Gamma^l_{ij}$ are the connection coefficients of $M$.
The contravariant components are 
\begin{align}
 \bar{g}^{ij}=0,\,\, \bar{g}^{ij^*}=\delta_i^j,\,\, \bar{g}^{i^{*}j}=\delta_i^j  \,\,\textrm{and}\,\, \bar{g}^{i^*j^*}=2 \omega_l \Gamma^l_{ij}-c_{ij},
 \end{align}
for $i,j$ ranging from $1$ to $n$ and $i^*,j^*$ ranging from $n+1$ to $2n$.\\
 Where $\omega_l$ are extended coordinates and $c_{ij}$ is a symmetric $(0,2)$ tensor on $M$.\\
We note following results for the connection coefficients on extended space,\\
$\bar{\Gamma}^k_{ij}=\Gamma^k_{ij}$,\\
$\bar{\Gamma}^{k^{*}}_{ij}=\omega_l R^l_{kji}+\frac{1}{2}(\nabla_i c_{jk}+\nabla_j c_{ik}-\nabla_h c_{ij})$,\\
$\bar{\Gamma}^k_{i^* j}=0, \,\, \bar{\Gamma}^{k^*}_{i^* j}=-\Gamma^i_{jk}, \,\, \bar{\Gamma}^k_{i^* j^*}=0, \,\, \bar{\Gamma}^{k^*}_{i^* j^*}=0$.
\\
The components of the Riemann curvature tensor of the extended space are given by \\
\\
$\bar{R}^i_{jkl}=R^i_{jkl}$,\\
$ \bar{R}^{i^{*}}_{jkl}=\frac{1}{2}[\nabla_j(\nabla_l c_{ki}-\nabla_i c_{kl})-\nabla_k(\nabla_l c_{ji}-\nabla_i c_{jl})-R^m_{jkl}c_{mi}-R^m_{jki}c_{lm}]$\\  $\hspace*{1.4 cm} +\omega_a(\nabla_j R^a_{ilk}-\nabla_k R^a_{ilj})$,\\
$\bar{R}^{i^{*}}_{{j^*}kl}=R^j_{ilk}$,\\
$\bar{R}^{i^{*}}_{j{k^*}l}=-R^k_{ilj}$,\\
and $\bar{R}^{i^{*}}_{jk{l^*}}=R^l_{kji}$.\\
The others are zero. $i^*,j^*, k^*,l^*$ ranges from $n+1$ to $2n$.
We lower the index in the middle position, to get
\begin{equation}
R_{ijkl}=g_{mk}R^m_{ijl}.
\end{equation}
It may be noted by simple calculation that 
$\bar{R}_{i^*jk^*l}=0$.
Further,
$\bar{R}_{ij}=R_{ij}+R_{ji}$,\;$\bar{R}_{i^*j}=0$ and $\bar{R}_{i^*j^*}=0$.
\section{Main results}
\begin{theorem}\label{star}
Let $(M,g)$ be a Riemannian manifold under Yamabe flow. Then,
\begin{equation}
\frac{\partial g^{ij}}{\partial t}=-(S-R)g^{ij}.
\end{equation}
\end{theorem}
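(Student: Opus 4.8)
The plan is to exploit the algebraic identity $g^{ik}g_{kj}=\delta^i_j$, which holds at every time $t$ along the flow, and differentiate it in $t$. Since the Kronecker delta is constant, the product rule gives
\begin{equation}
\frac{\partial g^{ik}}{\partial t}\,g_{kj}+g^{ik}\,\frac{\partial g_{kj}}{\partial t}=0.
\end{equation}
First I would substitute the Yamabe flow equation \eqref{imp} in the local form $\frac{\partial g_{kj}}{\partial t}=(S-R)g_{kj}$ into the second term. Because $S-R$ is a scalar function on $M$ (independent of the indices), it factors out of the contraction, leaving $\frac{\partial g^{ik}}{\partial t}\,g_{kj}=-(S-R)\,g^{ik}g_{kj}=-(S-R)\,\delta^i_j$.

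Next I would isolate $\frac{\partial g^{ij}}{\partial t}$ by contracting both sides with $g^{jl}$ and using $g_{kj}g^{jl}=\delta_k^l$ together with $\delta^i_j g^{jl}=g^{il}$. This yields $\frac{\partial g^{il}}{\partial t}=-(S-R)g^{il}$, which is the claimed identity after relabelling $l\mapsto j$. The symmetry of $g^{ij}$ and $g_{ij}$ makes the placement of the free indices immaterial, so no separate bookkeeping is needed.

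There is essentially no hard step here: the argument is the standard time-derivative-of-the-inverse-metric computation, and the only point worth stating carefully is that $S-R$ does not carry tensorial indices, so it commutes with all the metric contractions; one should also note that possible $t$-dependence of $S$ and $R$ is irrelevant because the identity $g^{ik}g_{kj}=\delta^i_j$ is differentiated at a fixed time. If anything, the mild subtlety is simply making explicit that differentiation in $t$ commutes with raising indices only up to the correction term above, which is precisely the content of the theorem.
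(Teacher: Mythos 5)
Your proposal is correct and follows essentially the same route as the paper: differentiate the identity $g^{ik}g_{kj}=\delta^i_j$ in $t$, insert the Yamabe flow equation $\frac{\partial g_{kj}}{\partial t}=(S-R)g_{kj}$, and solve for $\frac{\partial g^{ij}}{\partial t}$. Your version is in fact slightly more careful than the paper's, since you make the final contraction with $g^{jl}$ explicit rather than leaving the index bookkeeping implicit.
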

\begin{proof}

From $g^{ij}g_{il}=\delta^{j}_{l}$, on differentiating w.r.t 't' we get
\begin{equation}
\frac{\partial g^{ij}}{\partial t}g_{il}+g^{ij}\frac{\partial g_{il}}{\partial t}=0
\end{equation}
\begin{equation}
g^{ij}\frac{\partial g_{il}}{\partial t}=-g_{il}\frac{\partial g^{ij}}{\partial t}
\end{equation}
So,
\begin{equation}
g^{ij}(S-R)g_{il}=-g_{il}\frac{\partial g^{ij}}{\partial t}
\end{equation}
Hence,
\begin{equation}
\frac{\partial g^{ij}}{\partial t}=-(S-R)g^{ij}
\end{equation}
\end{proof}
\begin{theorem}\label{star1}
If $(M,g)$ is a Riemannian manifold under Yamabe flow, then for the Riemannian curvature tensor $R_{ijkl}$, we have $\frac{\partial R_{ijkl}}{\partial t}=(S-R)R_{ijkl}$
\end{theorem}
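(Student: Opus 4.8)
The plan is to differentiate the index--lowering identity $R_{ijkl}=g_{mk}R^{m}_{ijl}$ with respect to $t$ and substitute the Yamabe flow equation \eqref{imp} together with Theorem~\ref{star}. By the Leibniz rule,
\[
\frac{\partial R_{ijkl}}{\partial t}=\frac{\partial g_{mk}}{\partial t}\,R^{m}_{ijl}+g_{mk}\,\frac{\partial R^{m}_{ijl}}{\partial t}.
\]
Using \eqref{imp}, the first summand equals $(S-R)\,g_{mk}R^{m}_{ijl}=(S-R)R_{ijkl}$, which is precisely the asserted right--hand side. Hence the whole statement reduces to showing that the $(1,3)$ curvature tensor is stationary along the flow, i.e. $\dfrac{\partial R^{m}_{ijl}}{\partial t}=0$.

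For this reduction I would use that $R^{m}_{ijl}$ is a fixed universal expression in the connection coefficients $\Gamma^{p}_{qr}$ and their first partial derivatives, so it is enough to prove $\dfrac{\partial \Gamma^{p}_{qr}}{\partial t}=0$. Differentiating the Koszul formula $\Gamma^{p}_{qr}=\tfrac12 g^{ps}\bigl(\partial_{q}g_{sr}+\partial_{r}g_{sq}-\partial_{s}g_{qr}\bigr)$ in $t$, inserting $\partial_{t}g_{ij}=(S-R)g_{ij}$ and $\partial_{t}g^{ps}=-(S-R)g^{ps}$ from Theorem~\ref{star}, and regrouping, one is left with the tensorial quantity $\tfrac12\bigl(\delta^{p}_{r}\nabla_{q}(S-R)+\delta^{p}_{q}\nabla_{r}(S-R)-g_{qr}\nabla^{p}(S-R)\bigr)$; once this vanishes one gets $\partial_{t}\Gamma^{p}_{qr}=0$, hence $\partial_{t}R^{m}_{ijl}=0$, and the first display closes.

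The step that requires care --- and the one I expect to be the main obstacle --- is exactly the vanishing of that last quantity, since it is zero only where the conformal factor $S-R$ has vanishing spatial gradient (for instance at a fixed point of the Yamabe flow, where $R$ has already become the constant $S$, or more generally whenever one is entitled to treat $S-R$ as spatially constant while differentiating in $t$). I would therefore organize the argument as: (i) the Leibniz computation above; (ii) the reduction $\partial_{t}\Gamma^{p}_{qr}=0\Rightarrow\partial_{t}R^{m}_{ijl}=0$; and (iii) the justification that in the present setting the gradient terms drop out, which carries the real weight of the proof. If instead one records the full evolution, the extra contribution $g_{mk}\bigl(\nabla_{i}A^{m}_{jl}-\nabla_{j}A^{m}_{il}\bigr)$ with $A^{p}_{qr}=\tfrac12\bigl(\delta^{p}_{r}\nabla_{q}(S-R)+\delta^{p}_{q}\nabla_{r}(S-R)-g_{qr}\nabla^{p}(S-R)\bigr)$ should be carried alongside $(S-R)R_{ijkl}$.
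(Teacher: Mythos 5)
Your Leibniz computation and your variation formula for the Christoffel symbols are both correct, and the conclusion you reach --- that $\partial_t R_{ijkl}=(S-R)R_{ijkl}$ holds only modulo the extra term $g_{mk}\bigl(\nabla_i A^m_{jl}-\nabla_j A^m_{il}\bigr)$ with $A^p_{qr}=\tfrac12\bigl(\delta^p_r\nabla_q(S-R)+\delta^p_q\nabla_r(S-R)-g_{qr}\nabla^p(S-R)\bigr)$ --- is the accurate statement. Since $S$ is the spatially constant average scalar curvature, that term vanishes precisely when $\nabla R=0$, i.e. when the scalar curvature is already spatially constant (for instance at a fixed point of the flow); it does not vanish along a general Yamabe flow, so step (iii) of your plan cannot be carried out unconditionally, and the identity in the theorem is not an unconditional consequence of the flow equation.

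You should also know that the paper's own proof takes exactly the shortcut you declined to take: it writes $R_{ijkl}=g(R(e_i,e_j)e_k,e_l)$, differentiates only the metric factor using $\partial_t g=(S-R)g$, and tacitly treats the $(1,3)$-curvature $R(e_i,e_j)e_k$ as independent of $t$, i.e. it silently discards the $\partial_t\Gamma^p_{qr}$ contribution with no justification. So your route is essentially the paper's route (lower an index, differentiate, feed in the flow equation on the metric), but your bookkeeping is the honest version: the gap you isolated is genuinely present in the paper's argument, and the stated theorem is valid only under the additional hypothesis you identify (spatially constant $S-R$), or else must be replaced by your corrected evolution equation carrying the extra $A$-term.
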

\begin{proof}
We have $R_{ijkl}=g(R(e_i,e_j,e_k),e_l)$.
Differentiating partially with respect to 't', we get
\begin{equation}
\frac{\partial R_{ijkl}}{\partial t}=(S-R)\frac{\partial }{\partial t}g(R(e_i,e_j)e_k,e_l)
\end{equation}
Under yamabe flow, $\frac{\partial g_{ij}}{\partial t}=(S-R)g_{ij}$
Hence
\begin{equation}
\frac{\partial R_{ijkl}}{\partial t}=(S-R)R_{ijkl}
\end{equation}
\end{proof}
\begin{theorem}\label{star2}
The Ricci tensor is invariant under yamabe flow .
\end{theorem}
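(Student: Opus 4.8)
The plan is to realize the Ricci tensor as a metric contraction of the Riemann curvature tensor and then simply feed in the two evolution equations that have already been established. In the conventions of Section~2 (where the index is lowered in the middle position) the Ricci tensor is a metric trace of the curvature tensor, $R_{ij}=g^{kl}R_{kilj}$, so differentiating this identity in $t$ and using the product rule gives
\[
\frac{\partial R_{ij}}{\partial t}=\frac{\partial g^{kl}}{\partial t}\,R_{kilj}+g^{kl}\,\frac{\partial R_{kilj}}{\partial t}.
\]
Everything then reduces to substituting Theorem~\ref{star} and Theorem~\ref{star1} and checking that the two contributions cancel.

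By Theorem~\ref{star} we have $\frac{\partial g^{kl}}{\partial t}=-(S-R)g^{kl}$, and by Theorem~\ref{star1} we have $\frac{\partial R_{kilj}}{\partial t}=(S-R)R_{kilj}$. Since $S$ and $R$ are scalar functions on $M$, independent of the summation indices, they pull outside the contraction, so the right-hand side becomes
\[
-(S-R)\,g^{kl}R_{kilj}+(S-R)\,g^{kl}R_{kilj}=0 .
\]
Hence $\frac{\partial R_{ij}}{\partial t}=0$: the dilation factor $(S-R)$ carried by the curvature tensor is exactly annihilated by the opposite dilation $-(S-R)$ of the inverse metric used in the contraction, and the Ricci tensor is invariant along the Yamabe flow.

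The only delicate point — and the closest thing to an obstacle here — is bookkeeping with curvature conventions: one must write the contraction producing $R_{ij}$ from $R_{ijkl}$ so that it is compatible with the index placement in Theorem~\ref{star1}, ensuring that exactly one factor of $g^{kl}$ and one factor of the $(0,4)$ curvature appear and that their $(S-R)$-weights are opposite. Once that is arranged the argument is purely algebraic and no PDE analysis is needed. As a consistency check I would also differentiate the Ricci endomorphism $R^i_j=g^{ik}R_{kj}$: only the inverse metric then carries a surviving $t$-dependence, giving $\frac{\partial R^i_j}{\partial t}=-(S-R)R^i_j$, which is consistent with raising an index against $g^{ik}$ and with $R_{ij}$ itself being invariant.
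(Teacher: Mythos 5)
Your proposal is correct and follows essentially the same route as the paper: write the Ricci tensor as the contraction $R_{jk}=g^{il}R_{ijkl}$, differentiate in $t$ by the product rule, and substitute Theorems \ref{star} and \ref{star1} so that the $(S-R)$ and $-(S-R)$ contributions cancel. The consistency remark about $R^i_j$ is a nice addition but not needed; no changes required.
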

\begin{proof}
\begin{equation}
\frac{\partial R_{jk}}{\partial t}=g^{il}\frac{\partial R_{ijkl}}{\partial t}+\frac{\partial g^{il}}{\partial t} R_{ijkl} 
\end{equation}
Using \ref{star} and \ref{star1} we get,
\begin{equation}
\frac{\partial R_{jk}}{\partial t}=g^{il}(S-R)R_{ijkl}-(S-R)g^{il}R_{ijkl}
\end{equation}
Hence $\frac{\partial R_{jk}}{\partial t}=0$, from which we get the required result.
\end{proof}
\begin{theorem}
If R is the scalar curvature tensor for a Riemannian manifold under yamabe flow, then $\frac{\partial R}{\partial t}=-(S-R)R$
\end{theorem}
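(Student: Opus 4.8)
The plan is to express the scalar curvature as the metric trace of the Ricci tensor, $R = g^{jk}R_{jk}$, and then differentiate this product with respect to $t$, invoking the two evolution facts already established: Theorem~\ref{star}, which gives $\partial_t g^{jk} = -(S-R)g^{jk}$, and Theorem~\ref{star2}, which gives $\partial_t R_{jk} = 0$.

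First I would apply the Leibniz rule to write
\[
\frac{\partial R}{\partial t} = \frac{\partial g^{jk}}{\partial t}R_{jk} + g^{jk}\frac{\partial R_{jk}}{\partial t}.
\]
Next I would substitute $\partial_t R_{jk} = 0$ from Theorem~\ref{star2}, so the second term drops out entirely, leaving $\partial_t R = (\partial_t g^{jk})R_{jk}$. Then I would substitute $\partial_t g^{jk} = -(S-R)g^{jk}$ from Theorem~\ref{star}, obtaining $\partial_t R = -(S-R)g^{jk}R_{jk}$, and finally recognize $g^{jk}R_{jk} = R$ to conclude $\partial_t R = -(S-R)R$.

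There is essentially no serious obstacle here; the content of the statement is carried entirely by the two cited theorems, and what remains is a one-line contraction argument. The only point requiring a little care is the bookkeeping of indices — making sure the same convention for forming the scalar curvature from $R_{jk}$ (and, upstream, from $R_{ijkl}$) is used consistently, so that the factor $(S-R)$ appears with the correct sign. If one instead wanted a self-contained derivation not routing through Theorem~\ref{star2}, one could differentiate $R = g^{il}g^{jk}R_{ijkl}$ directly using Theorems~\ref{star} and~\ref{star1}: the two factors of $\partial_t g^{-1}$ contribute $-2(S-R)R$ and the $\partial_t R_{ijkl}$ term contributes $+(S-R)R$, again summing to $-(S-R)R$; this alternative also serves as a consistency check.
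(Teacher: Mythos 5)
Your proposal is correct and follows exactly the paper's own argument: differentiate $R = g^{jk}R_{jk}$ by the Leibniz rule, kill the $\partial_t R_{jk}$ term via Theorem~\ref{star2}, and substitute $\partial_t g^{jk}=-(S-R)g^{jk}$ from Theorem~\ref{star} to get $\partial_t R=-(S-R)R$. The alternative check through $R=g^{il}g^{jk}R_{ijkl}$ is a nice bonus but not part of the paper's proof.
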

\begin{proof}
Differentiating the equation $R=g^{jk}R_{jk}$, we get
$\frac{\partial R}{\partial t}=g^{jk}\frac{\partial R_{jk}}{\partial t}+\frac{\partial g^{jk}}{\partial t}R_{jk}$
Using \ref{star} and \ref{star2} we get,
\begin{equation}
\frac{\partial R}{\partial t}=-(S-R)g^{jk}R_{jk}.
\end{equation}
That is,
\begin{equation}
\frac{\partial R}{\partial t}=-(S-R)R
\end{equation}
\end{proof}
\begin{theorem}\label{star3}
The concircular curvature tensor $C_{ijkl}$ under yamabe flow is given by $\frac{\partial C_{ijkl}}{\partial t}=(S-R)C_{ijkl}$
\end{theorem}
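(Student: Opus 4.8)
The plan is to differentiate the defining relation of the concircular curvature tensor,
\begin{equation}
C_{ijkl}=R_{ijkl}-\frac{R}{n(n-1)}\bigl(g_{ik}g_{jl}-g_{il}g_{jk}\bigr),
\end{equation}
with respect to $t$ and then substitute the three evolution identities already established above: $\frac{\partial g_{ij}}{\partial t}=(S-R)g_{ij}$ from \eqref{imp}, $\frac{\partial R_{ijkl}}{\partial t}=(S-R)R_{ijkl}$ from Theorem \ref{star1}, and $\frac{\partial R}{\partial t}=-(S-R)R$ from the theorem immediately preceding this one. The dimension $n$ is constant along the flow, so only $R$, $R_{ijkl}$ and the metric factors contribute derivatives.

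First I would compute the $t$-derivative of the correction term $\frac{R}{n(n-1)}\bigl(g_{ik}g_{jl}-g_{il}g_{jk}\bigr)$. By the Leibniz rule and \eqref{imp}, each quadratic block $g_{ik}g_{jl}$ evolves by $2(S-R)g_{ik}g_{jl}$, so the bracketed expression contributes $2(S-R)\bigl(g_{ik}g_{jl}-g_{il}g_{jk}\bigr)$; combining this with $\frac{\partial R}{\partial t}=-(S-R)R$ gives
\begin{equation}
\frac{\partial}{\partial t}\left[\frac{R}{n(n-1)}\bigl(g_{ik}g_{jl}-g_{il}g_{jk}\bigr)\right]=(-1+2)\,(S-R)\,\frac{R}{n(n-1)}\bigl(g_{ik}g_{jl}-g_{il}g_{jk}\bigr).
\end{equation}
Feeding this together with $\frac{\partial R_{ijkl}}{\partial t}=(S-R)R_{ijkl}$ back into the differentiated definition yields $\frac{\partial C_{ijkl}}{\partial t}=(S-R)R_{ijkl}-(S-R)\frac{R}{n(n-1)}\bigl(g_{ik}g_{jl}-g_{il}g_{jk}\bigr)=(S-R)C_{ijkl}$, which is the claim.

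I do not expect a genuine obstacle: the whole content is the bookkeeping of scalar coefficients, where the $-1$ coming from the evolution of the scalar curvature and the $+2$ coming from the two metric factors in the quadratic term must add to $+1$, so that the correction term scales by exactly the same factor $(S-R)$ as $R_{ijkl}$. It is worth recording that this is consistent with Theorem \ref{star2}: the Ricci tensor is frozen under the flow while the concircular tensor is not, because the concircular correction is assembled from $R$ and $g$ rather than from the Ricci tensor, and both of those do change. Finally, any alternative sign or index placement in the definition of $C_{ijkl}$ leaves the argument untouched, since only the overall multiplicative factor is at issue.
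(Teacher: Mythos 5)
Your proposal is correct and follows essentially the same route as the paper: differentiate the defining formula for $C_{ijkl}$, substitute $\frac{\partial g_{ij}}{\partial t}=(S-R)g_{ij}$, $\frac{\partial R_{ijkl}}{\partial t}=(S-R)R_{ijkl}$ and $\frac{\partial R}{\partial t}=-(S-R)R$, and observe that the $-1$ from the scalar curvature and the $+2$ from the two metric factors combine to give the single factor $(S-R)$ on the correction term. This is exactly the coefficient bookkeeping carried out in the paper's proof, so there is nothing to add.
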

\begin{proof}
The concircular curvature tensor is given by 
\begin{equation}\label{1}
C_{ijkl}=R_{ijkl}-\frac{R}{n(n-1)[g_{il}g_{jk}-g_{jl}g_{ik}]}
\end{equation}
Differentiating partially w.r.t 't', we get
\begin{equation}\begin{split}
\end{split}
\end{equation}
\begin{equation}
\frac{\partial C_{ijkl}}{\partial t}=\frac{\partial R_{ijkl}}{\partial t}+\frac{R(S-R)}{n(n-1)}[g_{il}g_{jk}-g_{jl}g_{ik}]-\frac{2R(S-R)}{n(n-1)}[g_{il}g_{jk}-g_{jl}g_{ik}]
\end{equation}
which gives,
\begin{equation}
\frac{\partial C_{ijkl}}{\partial t}=\frac{\partial R_{ijkl}}{\partial t}-\frac{R(S-R)}{n(n-1)}[g_{il}g_{jk}-g_{jl}g_{ik}]
\end{equation}
Using \ref{1} and \ref{star1} we get,
\begin{equation}
\frac{\partial C_{ijkl}}{\partial t}=(S-R)C_{ijkl}
\end{equation}
\end{proof}
\begin{theorem}\label{star4}
Let $L_{ijkl}$ be the conharmonic curvature tensor of a Riemannian manifold $(M,g)$. Then under Yamabe flow, we have 
\begin{equation}
\frac{\partial L_{ijkl}}{\partial t}=(S-R)L_{ijkl}
\end{equation}
\end{theorem}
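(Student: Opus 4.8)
The plan is to differentiate the defining expression of the conharmonic curvature tensor term by term, exactly as in the proof of Theorem~\ref{star3}, and then substitute the evolution laws already established. Recall that the conharmonic curvature tensor of an $n$-dimensional Riemannian manifold has the form
\begin{equation}
L_{ijkl}=R_{ijkl}-\frac{1}{n-2}\left(g_{il}R_{jk}-g_{jl}R_{ik}+g_{jk}R_{il}-g_{ik}R_{jl}\right),
\end{equation}
that is, the Riemann tensor corrected by metric$\,\times\,$Ricci terms, with no explicit scalar curvature term (this is what distinguishes it from the Weyl tensor). Since the dimension $n$ does not change along the flow, the factor $\frac{1}{n-2}$ is a genuine constant, so differentiation in $t$ acts only on the tensorial factors.

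First I would collect the three ingredients supplied by the excerpt: $\frac{\partial R_{ijkl}}{\partial t}=(S-R)R_{ijkl}$ (Theorem~\ref{star1}); $\frac{\partial g_{ij}}{\partial t}=(S-R)g_{ij}$ (the Yamabe flow equation \eqref{imp}); and $\frac{\partial R_{jk}}{\partial t}=0$ (Theorem~\ref{star2}). Then I would apply the Leibniz rule to each of the four correction terms; a representative computation is
\begin{equation}
\frac{\partial}{\partial t}\left(g_{il}R_{jk}\right)=\frac{\partial g_{il}}{\partial t}\,R_{jk}+g_{il}\,\frac{\partial R_{jk}}{\partial t}=(S-R)\,g_{il}R_{jk},
\end{equation}
and the remaining three terms $g_{jl}R_{ik}$, $g_{jk}R_{il}$, $g_{ik}R_{jl}$ behave identically. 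Summing, the entire bracket is multiplied by the factor $(S-R)$, and combining with Theorem~\ref{star1} for the $R_{ijkl}$ term yields $\frac{\partial L_{ijkl}}{\partial t}=(S-R)L_{ijkl}$.

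I do not anticipate a genuine obstacle here: the statement is a formal consequence of the three cited facts. The only point requiring a little care is that it is the fully lowered Ricci tensor $R_{jk}$ that is invariant under the flow (Theorem~\ref{star2}), not its $(1,1)$ or $(2,0)$ versions, so one should make sure the conharmonic tensor is written with all indices down before differentiating. With that convention every correction term is homogeneous of weight one in the conformal factor $(S-R)$, which matches the weight of $R_{ijkl}$ itself, and this homogeneity is precisely why the correction term survives the differentiation and reassembles into $L_{ijkl}$.
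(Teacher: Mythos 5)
Your proposal is correct and follows essentially the same route as the paper: differentiate the lowered conharmonic tensor term by term, use the Yamabe flow equation for the metric factors, Theorem~\ref{star2} for the Ricci factors, and Theorem~\ref{star1} for the Riemann term, then reassemble $(S-R)L_{ijkl}$. The only cosmetic difference is that you invoke the Ricci invariance explicitly, whereas the paper uses it tacitly when it drops the $g\,\partial_t R$ terms from the Leibniz expansion.
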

\begin{proof}
The conharmonic curvature tensor $L_{ijkl}$ is given by 
\begin{equation}\label{2}
L_{ijkl}=R_{ijkl}-\frac{1}{n-2}[g_{jk}R_{il}+g_{il}R_{jk}-g_{ik}R_{jl}-g_{jl}R_{ik}]
\end{equation}
Therefore,
\begin{equation}
\frac{\partial L_{ijkl}}{\partial t}=\frac{\partial R_{ijkl}}{\partial t}-\frac{1}{n-2}[\frac{\partial g_{jk}}{\partial t}R_{il}+\frac{\partial g_{il}}{\partial t}R_{jk}-\frac{\partial g_{ik}}{\partial t}R_{jl}-\frac{\partial g_{jl}}{\partial t}R_{ik}]
\end{equation}
That is,
\begin{equation}
\frac{\partial L_{ijkl}}{\partial t}=(S-R)R_{ijkl}-\frac{(S-R)}{n-2}[g_{jk}R_{il}+g_{il}R_{jk}-g_{ik}R_{jl}-g_{jl}R_{ik}]
\end{equation}
Hence,
\begin{equation}
\frac{\partial L_{ijkl}}{\partial t}=(S-R)L_{ijkl}
\end{equation}
\end{proof}
Finally for the Weyl curvature tensor, $W_{ijkl}$, we have the following theorem.
\begin{theorem}
Let $W_{ijkl}$ be the Weyl curvature tensor on a Riemannian manifold$(M,g)$. Then under Yamabe flow, we have
\begin{equation}
\frac{\partial W_{ijkl}}{\partial t}=(S-R)W_{ijkl}
\end{equation}
\end{theorem}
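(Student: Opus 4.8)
The plan is to differentiate the defining formula for the Weyl tensor term by term, exactly as was done for the concircular and conharmonic tensors in Theorems~\ref{star3} and \ref{star4}, and then reassemble the result into the shape $(S-R)W_{ijkl}$. Recall that in dimension $n$ the Weyl curvature tensor can be written as
\begin{equation*}
W_{ijkl}=R_{ijkl}-\frac{1}{n-2}\bigl[g_{jk}R_{il}+g_{il}R_{jk}-g_{ik}R_{jl}-g_{jl}R_{ik}\bigr]+\frac{R}{(n-1)(n-2)}\bigl[g_{jk}g_{il}-g_{ik}g_{jl}\bigr].
\end{equation*}
The ingredients I would use are all already available: the evolution $\frac{\partial g_{ij}}{\partial t}=(S-R)g_{ij}$ from \eqref{imp}, the relation $\frac{\partial R_{ijkl}}{\partial t}=(S-R)R_{ijkl}$ from Theorem~\ref{star1}, the invariance $\frac{\partial R_{jk}}{\partial t}=0$ from Theorem~\ref{star2}, and $\frac{\partial R}{\partial t}=-(S-R)R$ from the scalar-curvature theorem.

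First I would handle the Ricci-contraction block. Since each $R_{il}$ factor is time-independent, only the metric factor contributes, giving $\frac{\partial}{\partial t}(g_{jk}R_{il})=(S-R)g_{jk}R_{il}$ and similarly for the other three terms; hence this whole block differentiates to $-\frac{(S-R)}{n-2}[g_{jk}R_{il}+g_{il}R_{jk}-g_{ik}R_{jl}-g_{jl}R_{ik}]$. Next I would handle the scalar-curvature block: using the product rule, $\frac{\partial}{\partial t}(g_{jk}g_{il}-g_{ik}g_{jl})=2(S-R)(g_{jk}g_{il}-g_{ik}g_{jl})$, so
\begin{equation*}
\frac{\partial}{\partial t}\!\left(\frac{R}{(n-1)(n-2)}[g_{jk}g_{il}-g_{ik}g_{jl}]\right)=\frac{-(S-R)R+2R(S-R)}{(n-1)(n-2)}[g_{jk}g_{il}-g_{ik}g_{jl}]=\frac{R(S-R)}{(n-1)(n-2)}[g_{jk}g_{il}-g_{ik}g_{jl}].
\end{equation*}
Adding the three differentiated blocks and factoring out $(S-R)$ reproduces precisely the original Weyl formula multiplied by $(S-R)$, which is the claim.

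I do not expect a genuine obstacle here — the computation is of the same routine nature as the preceding theorems. The only place requiring mild care is the scalar-curvature block, where the contribution of $\frac{\partial R}{\partial t}=-(S-R)R$ partially cancels the contribution of $\frac{\partial}{\partial t}(g\otimes g)=2(S-R)(g\otimes g)$, leaving the coefficient $+\frac{R(S-R)}{(n-1)(n-2)}$ rather than a multiple with the wrong sign; getting that single sign right is what makes the final regrouping close up cleanly.
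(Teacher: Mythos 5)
Your computation is correct and reaches the stated conclusion, but it follows a different route from the paper. You differentiate the Weyl formula directly, term by term, using $\frac{\partial g_{ij}}{\partial t}=(S-R)g_{ij}$, Theorem~\ref{star1} for the Riemann block, Theorem~\ref{star2} for the Ricci-contraction block, and $\frac{\partial R}{\partial t}=-(S-R)R$ for the scalar block, where you correctly track the partial cancellation $-(S-R)R+2R(S-R)=R(S-R)$. The paper instead never differentiates the Weyl expression itself: it first observes the purely algebraic identity $W_{ijkl}-L_{ijkl}=-\frac{n}{n-2}\,(C_{ijkl}-R_{ijkl})$ coming from comparing the definitions of the concircular and conharmonic tensors, differentiates that identity, and then substitutes the already-established evolution equations of Theorems~\ref{star1}, \ref{star3} and \ref{star4}, so the sign cancellation you handle explicitly is absorbed into the earlier proof of Theorem~\ref{star3}. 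The paper's route buys a shorter argument once $C$ and $L$ have been treated, and exhibits the Weyl evolution as a consequence of the earlier theorems; your route buys independence from Theorems~\ref{star3} and \ref{star4} (you need only the metric, Riemann, Ricci and scalar evolutions) and makes the mechanism of the cancellation in the scalar-curvature term visible. Both arguments are routine and both are valid within the paper's framework.
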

\begin{proof}
The Weyl curvature tensor is given by
\begin{equation}
\begin{split}
W_{ijkl}&=R_{ijkl}-\frac{1}{n-2}(g_{jk}R_{il}-g_{ik}R_{jl}+g_{il}R_{jk}-g_{jl}R_{ik})\\&+\frac{R}{(n-1)(n-2)}(g_{il}g_{jk}-g_{jl}g_{ik})
\end{split}
\end{equation}
Using \ref{1} and \ref{2}, we get
\begin{equation}
W_{ijkl}-L_{ijkl}=-\frac{n}{n-2}(C_{ijkl}-R_{ijkl})
\end{equation}
Differentiating w.r.t t,
\begin{equation}
\frac{\partial W_{ijkl}}{\partial t}-\frac{\partial L_{ijkl}}{\partial t}=-\frac{n}{n-2}(\frac{\partial C_{ijkl}}{\partial t}-\frac{\partial R_{ijkl}}{\partial t})
\end{equation}
That is,
\begin{equation}
\frac{\partial W_{ijkl}}{\partial t}=\frac{\partial L_{ijkl}}{\partial t}-\frac{n}{n-2}(\frac{\partial C_{ijkl}}{\partial t}-\frac{\partial R_{ijkl}}{\partial t})
\end{equation}
Using \ref{star1},\ref{star3} and \ref{star4} we get 
\begin{equation}
\frac{\partial W_{ijkl}}{\partial t}=(S-R)L_{ijkl}-\frac{n(S-R)}{n-2}(C_{ijkl}-R_{ijkl})
\end{equation}
That is,
\begin{equation}
\frac{\partial W_{ijkl}}{\partial t}=(S-R)W_{ijkl}
\end{equation}
\end{proof}
\section*{On modified Riemann extension}
Here we study the class of Riemannian metrics of modified Riemann extension. We shall find the condition for which this class satisfies the Yamabe flow equation.
Since the metrics of modified Riemann extension are of constant scalar curvature, we have 
\begin{equation}
\frac{\partial \bar{g_{jk}}}{\partial t}=0.
\end{equation}
Hence we have the following theorem.
\begin{theorem}
Yamabe flow on modified Riemann extension is stationary.
\end{theorem}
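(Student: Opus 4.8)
The plan is to verify that the right-hand side of the Yamabe flow equation \eqref{imp} vanishes identically when $g$ is a modified Riemann extension $\bar g$ on $T^{*}M$, so that $\partial\bar g/\partial t=0$ and the flow does not move the metric at all. In effect, the theorem just records that a modified Riemann extension is a fixed point of the Yamabe flow, and the work is in confirming the hypothesis $S-\bar R=0$.

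First I would compute the scalar curvature $\bar R$ of $(T^{*}M,\bar g)$ from the data collected in the Preliminaries. Contracting the Ricci tensor with the inverse metric, $\bar R=\bar g^{ab}\bar R_{ab}$, and splitting the index pair into unstarred/starred blocks: the term $\bar g^{ij}\bar R_{ij}$ drops out because $\bar g^{ij}=0$; the blocks $\bar g^{ij^{*}}\bar R_{ij^{*}}$ and $\bar g^{i^{*}j}\bar R_{i^{*}j}$ vanish because $\bar R_{i^{*}j}=0$ (and hence $\bar R_{ij^{*}}=0$ by symmetry of the Ricci tensor); and $\bar g^{i^{*}j^{*}}\bar R_{i^{*}j^{*}}=0$ because $\bar R_{i^{*}j^{*}}=0$. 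Therefore $\bar R\equiv 0$; in particular a modified Riemann extension has constant scalar curvature, which is the fact already invoked in the paragraph preceding the statement.

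Next, since $\bar R$ is constant, its average value $S$ equals that same constant, so $S-\bar R=0$. Substituting into \eqref{imp} gives $\partial\bar g/\partial t=(S-\bar R)\,\bar g=0$, i.e.\ in local components $\partial\bar g_{jk}/\partial t=0$. Hence the modified Riemann extension is a fixed point of the Yamabe flow and the flow issuing from it is stationary, which is the assertion of the theorem.

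The computation involves no analytic difficulty; the only point requiring a little care is the index bookkeeping in the contraction $\bar R=\bar g^{ab}\bar R_{ab}$ — one must check that every block surviving the vanishing of $\bar g^{ij}$ is exactly one whose Ricci component is listed as zero in the Preliminaries — together with reading the average scalar curvature $S$ as the mean value of the scalar curvature function, so that constancy of $\bar R$ forces $S=\bar R$ at once.
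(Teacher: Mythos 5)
Your proof is correct and follows essentially the same route as the paper: stationarity is obtained from $S-\bar R=0$ in \eqref{imp} because the scalar curvature of a modified Riemann extension is constant. You actually go a bit further than the paper, which only asserts constancy, by computing $\bar R=\bar g^{ab}\bar R_{ab}=0$ from the block structure of $\bar g^{ab}$ and the vanishing of the mixed and starred Ricci components.
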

This is similar to results with Ricci flow and Dissipative Hypergeometric flow\cite{m}. 
\section*{On some standard metrics}
\begin{theorem}
Let (M,g(t)) be a family of Einstein manifolds. That is $R_{jk}(t)=\lambda(t) g_{jk}(t)$. Then $g(t)$ is a solution to Yamabe flow iff $\lambda(t)=\lambda_0 e^{-\int (S-R)dt}$, where $\lambda_0$ is the initial value of $\lambda(t)$
\end{theorem}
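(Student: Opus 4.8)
The plan is to reduce the statement to a first-order linear ODE for the Einstein factor $\lambda(t)$ by differentiating the relation $R_{jk}(t)=\lambda(t)g_{jk}(t)$ in $t$ and invoking Theorem \ref{star2}. Two preliminary observations set the stage. Tracing the Einstein relation with $g^{jk}(t)$ gives $R(t)=n\lambda(t)$, and by Schur's lemma (for $n\ge 3$, $M$ connected) $R$ is then constant on $M$; since $S$ is a global quantity, $S-R$ is a function of $t$ alone and $\int(S-R)\,dt$ is meaningful. Second, Theorem \ref{star2} says that the Ricci tensor is stationary along any Yamabe flow, $\partial_t R_{jk}=0$.

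For the forward implication, assume $g(t)$ solves \eqref{imp}. Differentiating $R_{jk}=\lambda g_{jk}$ in $t$, using $\partial_t R_{jk}=0$ on the left and $\partial_t g_{jk}=(S-R)g_{jk}$ on the right,
\[
0=\frac{\partial R_{jk}}{\partial t}=\lambda'(t)\,g_{jk}+\lambda(t)\,\frac{\partial g_{jk}}{\partial t}=\bigl(\lambda'(t)+(S-R)\lambda(t)\bigr)g_{jk}.
\]
As $g_{jk}$ is nondegenerate, $\lambda'+(S-R)\lambda=0$; multiplying by the integrating factor $e^{\int(S-R)\,dt}$ and using $\lambda(0)=\lambda_0$ gives $\lambda(t)=\lambda_0\,e^{-\int(S-R)\,dt}$.

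For the converse one runs this backwards: $\lambda(t)=\lambda_0 e^{-\int(S-R)\,dt}$ yields $\lambda'=-(S-R)\lambda$, and differentiating the Einstein relation forces
\[
\lambda(t)\,\frac{\partial g_{jk}}{\partial t}=\frac{\partial R_{jk}}{\partial t}-\lambda'(t)\,g_{jk}=\frac{\partial R_{jk}}{\partial t}+(S-R)\lambda(t)\,g_{jk},
\]
so (assuming $\lambda$ nowhere zero) $g(t)$ obeys \eqref{imp} exactly when $\partial_t R_{jk}=0$ along the family. This last point is where I expect the real work to lie: in the forward direction Theorem \ref{star2} supplies it for free, while in the converse it must be justified separately — the cleanest route is to observe that once $\lambda$ has the prescribed form the Einstein metrics differ only by the spatially constant conformal factor $e^{\int(S-R)\,dt}$, under which the $(0,2)$ Ricci tensor is invariant, so $\partial_t R_{jk}=0$ and the equivalence closes.
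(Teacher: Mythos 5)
Your forward direction is essentially the paper's proof: differentiate $R_{jk}=\lambda g_{jk}$ in $t$, kill the left side with Theorem \ref{star2}, substitute $\partial_t g_{jk}=(S-R)g_{jk}$ from \eqref{imp}, and integrate the resulting ODE $\lambda'=-(S-R)\lambda$; the paper does exactly this (without your Schur's-lemma remark, which is a worthwhile addition since it is what makes $\int(S-R)\,dt$ a function of $t$ alone). Where you differ is that you take the ``iff'' seriously: the paper's displayed computation proves only the forward implication and then asserts the theorem, whereas you isolate the converse and correctly identify that it hinges on knowing $\partial_t R_{jk}=0$ along the given family. Be aware, though, that your proposed patch is circular: from the hypothesis that $\lambda$ has the prescribed form you cannot conclude that the metrics $g(t)$ differ by the spatially constant conformal factor $e^{\int(S-R)\,dt}$ --- that is essentially the conclusion to be proved, and a family obtained, say, by pulling back a fixed Einstein metric through time-dependent diffeomorphisms has the same $\lambda(t)$ without satisfying \eqref{imp}. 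So the converse remains unestablished; this is a defect of the theorem's formulation that the paper shares (and silently skips), not a flaw in your main argument, but you should flag it as an additional hypothesis (e.g.\ that $g(t)$ stays in the fixed conformal class $[g_0]$ with spatially constant factor) rather than as something the conformal-invariance observation settles.
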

\begin{proof}
Given, $R_{jk}(t)=\lambda(t) g_{jk}(t)$.
So, 
\begin{equation}
\frac{\partial R_{jk}}{\partial t}=\lambda(t)\frac{\partial g_{jk}}{\partial t}+\frac{d\lambda}{dt}g_{jk}
\end{equation}
Using \ref{imp} and \ref{star2} we get,
\begin{equation}
\frac{d\lambda}{dt}g_{jk}=-\lambda(t)(S-R)
\end{equation}
That is,
\begin{equation}
\lambda=\lambda_0 e^{-\int (S-R)dt}
\end{equation}
\end{proof}
Similar result holds for the class of metrics of constant curvature.
\begin{theorem}
Let (M,g(t)) be a family of Riemannian manifolds of constant curvature. That is $R_{ijkl}(t)=\lambda(t)(g_{ik}(t)g_{jl}(t)-g_{il}(t)g_{jk}(t))$. Then $g(t)$ is a solution to Yamabe flow iff $\lambda(t)=\lambda_0 e^{-\int (S-R)dt}$, where $\lambda_0$ is the initial value of $\lambda(t)$
\end{theorem}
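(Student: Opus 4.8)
The plan is to proceed exactly as in the proof of the Einstein case just above, replacing the Einstein relation by the full constant‑curvature identity and Theorem \ref{star2} by Theorem \ref{star1}. Abbreviate $Q_{ijkl}(t):=g_{ik}(t)g_{jl}(t)-g_{il}(t)g_{jk}(t)$, so that the hypothesis reads $R_{ijkl}(t)=\lambda(t)\,Q_{ijkl}(t)$. The first step is to differentiate this identity in $t$ with the product rule:
\begin{equation}
\frac{\partial R_{ijkl}}{\partial t}=\frac{d\lambda}{dt}\,Q_{ijkl}+\lambda\left(\frac{\partial g_{ik}}{\partial t}g_{jl}+g_{ik}\frac{\partial g_{jl}}{\partial t}-\frac{\partial g_{il}}{\partial t}g_{jk}-g_{il}\frac{\partial g_{jk}}{\partial t}\right).
\end{equation}

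For the forward implication, assume $g(t)$ solves the Yamabe flow \ref{imp}. Substituting $\frac{\partial g_{ab}}{\partial t}=(S-R)g_{ab}$ in the bracket collapses it to $2(S-R)\,Q_{ijkl}$, while Theorem \ref{star1} turns the left‑hand side into $(S-R)R_{ijkl}=(S-R)\lambda\,Q_{ijkl}$. Hence
\begin{equation}
(S-R)\lambda\,Q_{ijkl}=\left(\frac{d\lambda}{dt}+2\lambda(S-R)\right)Q_{ijkl}.
\end{equation}
Since $Q_{ijkl}$ is not identically zero for $n\ge 2$, cancelling it leaves the scalar linear ODE $\frac{d\lambda}{dt}=-(S-R)\lambda$, whose solution with $\lambda(0)=\lambda_0$ is $\lambda(t)=\lambda_0\,e^{-\int(S-R)\,dt}$.

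For the converse one reverses the computation: starting from $\lambda(t)=\lambda_0 e^{-\int(S-R)dt}$, equivalently $\frac{d\lambda}{dt}=-(S-R)\lambda$, the displayed differentiated identity forces the bracketed combination of metric derivatives to equal $2(S-R)\,Q_{ijkl}$, and, using that $g$ is non‑degenerate and that $Q$ is the algebraic curvature tensor of a space form, this is equivalent to $\frac{\partial g_{ab}}{\partial t}=(S-R)g_{ab}$, the Yamabe flow equation. I expect the only genuine obstacle to be precisely this last reduction — legitimately passing from a tensor identity in the four‑index object $Q_{ijkl}$ back to the two‑index evolution equation for $g_{ab}$; the non‑vanishing of $Q_{ijkl}$ (which requires $n\ge 2$) and the proportionality $R_{ijkl}=\lambda Q_{ijkl}$ supplied by the constant‑curvature hypothesis are exactly what make this step go through. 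Everything else is the same routine differentiation already carried out in the Einstein and concircular cases.
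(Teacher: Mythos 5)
Your forward direction reproduces the paper's proof essentially verbatim: differentiate $R_{ijkl}=\lambda Q_{ijkl}$ in $t$, substitute the flow equation \ref{imp} into the bracket to obtain $2(S-R)R_{ijkl}$, apply Theorem \ref{star1} to the left-hand side, cancel $Q_{ijkl}$ and integrate the resulting ODE $\frac{d\lambda}{dt}=-(S-R)\lambda$. In fact this is \emph{all} the paper does: despite the ``iff'' in the statement, its proof only establishes that the Yamabe flow forces $\lambda(t)=\lambda_0 e^{-\int(S-R)dt}$, so up to that point you and the paper coincide.

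Your sketch of the converse, however, contains a genuine gap, and it is not the one you flagged. To conclude that the bracketed combination of metric derivatives equals $2(S-R)Q_{ijkl}$, you need to know the left-hand side $\frac{\partial R_{ijkl}}{\partial t}$ of the differentiated identity, and the only information available about it is Theorem \ref{star1} --- which is proved \emph{under the assumption} that $g(t)$ solves the Yamabe flow, precisely what the converse is supposed to establish. As written the argument is circular; without \ref{star1}, the differentiated identity is one equation relating the two unknowns $\partial_t R_{ijkl}$ and $\partial_t g_{ab}$, and the scalar ODE for $\lambda$ gives no control over $\partial_t g_{ab}$ by itself (for instance, pulling back a fixed space form by a $t$-dependent family of diffeomorphisms leaves $\lambda$, $R$ and the volume, hence $S$, unchanged, yet $\partial_t g$ is an arbitrary Lie derivative rather than a multiple of $g$). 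Two smaller points: your cancellation in the converse requires dividing by $\lambda$, which fails when $\lambda_0=0$, and the final reduction from the four-index identity to $\partial_t g_{ab}=(S-R)g_{ab}$ (contracting with $g^{jl}$ and taking a trace) is valid only for $n\ge 3$, since for $n=2$ trace-free symmetric tensors lie in the kernel of $h\mapsto h_{ik}g_{jl}+g_{ik}h_{jl}-h_{il}g_{jk}-g_{il}h_{jk}$. The concern you did raise is thus the repairable one; the circular appeal to \ref{star1} is the step that actually needs a new idea (or the theorem should be restated as a one-way implication, as the paper's own proof tacitly does).
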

\begin{proof}
We have, 
\begin{equation}
R_{ijkl}(t)=\lambda(t)(g_{ik}(t)g_{jl}(t)-g_{il}(t)g_{jk}(t))
\end{equation}
So,
\begin{equation}
\frac{\partial R_{ijkl}}{\partial t}=\lambda(t)(\frac{\partial g_{ik}}{\partial t}g_{jl}+g_{ik}\frac{\partial g_{jl}}{\partial t}-\frac{\partial g_{il}}{\partial t}g_{jk}-g_{il}\frac{\partial g_{jk}}{\partial t})+\frac{d\lambda}{dt}(g_{ik}(t)g_{jl}(t)-g_{il}(t)g_{jk}(t)
\end{equation}
That is,
\begin{equation}
\frac{\partial R_{ijkl}}{\partial t}=2(S-R)R_{ijkl}+\frac{d\lambda}{dt}(g_{ik}(t)g_{jl}(t)-g_{il}(t)g_{jk}(t)
\end{equation}
Using \ref{star1} we get,
\begin{equation}
(S-R)R_{ijkl}=2(S-R)R_{ijkl}+\frac{d\lambda}{dt}(g_{ik}(t)g_{jl}(t)-g_{il}(t)g_{jk}(t)
\end{equation}
That is,
\begin{equation}
\frac{d\lambda}{dt}(g_{ik}(t)g_{jl}(t)-g_{il}(t)g_{jk}(t))=-(S-R)R_{ijkl}
\end{equation}
which gives,
\begin{equation}
\frac{d\lambda}{dt}=-(S-R)\lambda
\end{equation}
which on integrating we get the required result.
\end{proof}

\section{Conclusion}

Thus we have obtained the conditions necessary for a modified Riemann extension under Yamabe flow, to remain as a modified Riemann extension. While describing the flow it may be summarized that for $g^{jk}$ and $R$ we have a negative sign. Otherwise the results of all the theorems are synchronized.


\begin{thebibliography}{99}\itemsep=-.2pc

\bibitem {lut} Luther Pfahler Eisenhart., {\it Fields of Parallel Vectors in Riemannian Space,} Annals of Mathematics, 1938, Vol. 39, No. 2,  316-321.

\bibitem {wal} A. G. Walker., {\it Canonical form for a Riemannian space with parallel field of null planes,} Quart. J. Math. Oxford, 1950.,Vol. 1, 67-69.

\bibitem {pat}  Paterson E.M and Walker., {\it Riemann extensions,} Quart,J. Math. Oxford, 1952., 3,19-28.

\bibitem {afi} Z. Afifi., {\it Riemann extension of affine
connected spaces,} Quart. J. Math. Oxford 1954.,Vol. 2, 5 , 312-30. 

\bibitem {sc}Schoen, Richard. Conformal deformation of a Riemannian metric to constant scalar curvature. J. Differential Geom. 20 (1984), no. 2, 479--495.

\bibitem {l}Chow, Bennett. The Yamabe flow on locally conformally flat manifolds with positive Ricci curvature. Comm. Pure Appl. Math. 45 (1992), no. 8, 1003--1014.

\bibitem {C-E-P-A} E. Calvi\={n}o-Louzao, E. Garc\'{i}a-R\'{i}o, P. Gilkey and A. Vazquez-Lorenzo., {\it The geometry of modified
Riemannian extensions,} Proc. R. Soc. Lond. Ser. A Math. Phys. Eng. Sci. 2009., Vol. 465,
no. 2107, 2023-2040.

\bibitem {C-G-V} E. Calvi\={n}o-Louzao, E. Garc\'{i}a-R\'{i}o and R. V\'{a}zquez-Lorenzo.,{\it Riemann extensions of torsionfree
connections with degenerate Ricci tensor,} Can. J. Math. (2010)., Vol. 62, No. 5, 1037-1057.

\bibitem {A-L-A} Aydin Gezer, Lokman Bilen, and Ali Cakmak., {\it Properties of modified
Riemannian extensions,} arXiv:1305.4478v2 [math.DG] 26 May 2013.


\bibitem {old}  Old\v{r}ich Kowalski and Masami Sekizawa., {\it The Riemann extensions with cyclic parallel Ricci tensor,} Math.Nachr., 2014, Vol. 287, No. 8-9, 955-961.


\bibitem {k}Ma, Li; Cheng, Liang. Yamabe flow and Myers type theorem on complete manifolds. J. Geom. Anal. 24 (2014), no. 1, 246--270. 

\bibitem {m}Nagaraja, Halammanavar G.; Dammu, Harish. Ricci flow on modified Riemann extensions. J. Geom. Symmetry Phys. 39 (2015), 45--53. 
\end{thebibliography}
\end{document}